\def\Mat{\text{M}}
\def\GL{\text{GL}}
\newcommand{\Ker}{\operatorname{Ker}}
\newcommand{\Vect}{\operatorname{Span}}
\newcommand{\im}{\operatorname{Im}}
\newcommand{\tr}{\operatorname{tr}}
\newcommand{\rk}{\operatorname{rk}}
\renewcommand{\setminus}{\smallsetminus}
\def\K{\mathbb{K}}
\def\N{\mathbb{N}}
\renewcommand{\L}{\mathbb{L}}
\def\calA{\mathcal{A}}
\def\calC{\mathcal{C}}
\def\calF{\mathcal{F}}
\def\calH{\mathcal{H}}
\def\calP{\mathcal{P}}
\def\lcro{\mathopen{[\![}}
\def\rcro{\mathclose{]\!]}}
\theoremstyle{definition}
\newtheorem{Def}{Definition}
\theoremstyle{plain}
\newtheorem{theo}{Theorem}
\newtheorem{prop}[theo]{Proposition}
\newtheorem{cor}[theo]{Corollary}
\newtheorem{lemme}[theo]{Lemma}
\theoremstyle{plain}
\theoremstyle{remark}
\newtheorem{Rems}{Remarks}
\newtheorem{Rem}[Rems]{Remark}
\title{On small matrix subalgebras with a trivial centralizer
\footnote{E-mail address: dsp.prof@gmail.com}}
\author{Cl\'ement de Seguins Pazzis \\
\begin{footnotesize}
\emph{Lyc\'ee Priv\'e Sainte-Genevi\`eve, 2, rue
de l'\'Ecole des Postes, 78029 Versailles Cedex, FRANCE.}
\end{footnotesize}}
\date{}
\begin{document}

\maketitle

\begin{abstract}
Given an integer $n \geq 3$, we investigate the minimal dimension of a subalgebra of $\Mat_n(\K)$
with a trivial centralizer. It is shown that this dimension is $5$ when $n$ is even
and $4$ when it is odd. In the latter case, we also determine all $4$-dimensional subalgebras with a trivial centralizer.
\end{abstract}

\vskip 2mm
\noindent
\emph{AMS Classification :} 15A45; 16S50.

\vskip 2mm
\noindent
\emph{Keywords :} commutation, matrices, algebras, dimension, centralizer.

\renewcommand{\labelitemi}{$\bullet$}

\section{Introduction}

Here, we set an integer $n \geq 2$ and a field $\K$.
Using the French convention, we let $\N$ denote the set of non-negative integers and $\N^*$ the one of
positive integers. We let $\Mat_n(\K)$ denote the algebra of square matrices of order $n$ with entries in $\K$
and $T_n(\K)$ its subalgebra of upper triangular matrices. We let $\Mat_{p,q}(\K)$
denote the set of matrices with $p$ rows, $q$ columns and entries in $\K$. \\
All subalgebras of $\Mat_n(\K)$ are required to contain the unit matrix $I_n$:
the subalgebra $\Vect(I_n)$ will be called \textbf{trivial}. \\
For $(i,j)\in \lcro 1,n\rcro^2$, we let $E_{i,j}$ denote the elementary matrix of $\Mat_n(\K)$
with a zero entry in every position except for $(i,j)$ where the entry is $1$. \\
Given a subset $V$ of $\Mat_n(\K)$, we let
$$\calC(V):=\bigl\{A \in \Mat_n(\K) : \; \forall M \in V, \; AM=MA\bigr\}$$
denote its \textbf{centralizer}, and we simply write $\calC(A)=\calC(\{A\})$
when $A$ is a matrix of $\Mat_n(\K)$.
Recall that $\calC(V)$ is always a subalgebra of $\Mat_n(\K)$ and that
$\calC(V)$ is also the centralizer of the subalgebra generated by $V$. \\
The Jordan matrix of order $n$ will be written $J_n=(\delta_{i+1\,,\,j})_{1 \leq i,j \leq n}$,
where $\delta_{a,b}$ equals $1$ if $a=b$, and $0$ otherwise.

\vskip 3mm
\noindent
In this paper, we will focus on subalgebras of $\Mat_n(\K)$ with a small dimension and a trivial centralizer.
The basic motivation for studying subalgebras with a trivial centralizer comes from the theory of representations of
algebras. Let $\calA$ be a subalgebra of $\Mat_n(\K)$, which we identify with the algebra of linear endomorphisms of
the vector space $\K^n$. This yields a structure of $\calA$-module on $\K^n$ for which the endomorphisms
naturally correspond to the matrices in the centralizer $\calC(\calA)$.
When $\K$ is algebraically closed and $\K^n$ is a simple $\calA$-module (i.e.\ when it has no non-trivial submodule),
then $\calC(\calA)$ is trivial, however the converse may not hold. In the case $\calA$ is generated by a finite subgroup
of $\GL_n(\K)$ and $\K$ has characteristic $0$, then the converse is classically true because $\calA$ is then semi-simple
(see the theory of linear representations of finite groups).
In the general case of an arbitrary field and an arbitrary subalgebra of $\Mat_n(\K)$, the condition
$\calC(\calA)=\Vect(I_n)$ may thus be seen as an alternative notion of simplicity or semi-simplicity,
which provides motivation enough for studying it systematically.

\vskip 3mm
\noindent
Non-trivial subalgebras of $\Mat_n(\K)$ with a trivial centralizer are actually commonplace.
A classical example is that of $T_n(\K)$. Indeed, let $A \in \calC\bigl(T_n(\K)\bigr)$. \\
Then $A$ commutes with $E_{i,i}$ for every $i \in \lcro 1,n\rcro$, which shows that $A$ is diagonal. \\
The commutation of $A$ with $E_{1,i}$ for every $i \in \lcro 2,n\rcro$ then shows that all diagonal entries of $A$ are equal.

It is somewhat harder to produce such subalgebras with a small dimension. Our main goal
is to find the smallest dimension for such a subalgebra and to classify the subalgebras
of minimal dimension.

\begin{Def}
We let $t_n(\K)$ (or simply $t_n$ when the field is obvious)
denote the smallest dimension of a subalgebra of $\Mat_n(\K)$ with a trivial centralizer.
\end{Def}

Notice first that a subalgebra of dimension $2$
is always of the form $\K[A]$ for some matrix $A$ which is not a scalar multiple of $I_n$, so its centralizer contains $A$
and is therefore non-trivial. This essentially solves the case $n=2$.

\begin{prop}
One has $t_2=3$. More precisely, $T_2(\K)$ has a trivial centralizer and dimension $3$.
\end{prop}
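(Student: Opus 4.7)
The plan is to establish the two assertions separately, showing first that $T_2(\K)$ witnesses the upper bound $t_2 \leq 3$, and then ruling out subalgebras of dimension at most $2$ with a trivial centralizer.

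For the upper bound, I would simply observe that $T_2(\K)$ has dimension $3$ (with basis $I_2, E_{1,1}, E_{1,2}$, say), and that its centralizer is trivial by direct application of the general argument already spelled out in the introduction for $T_n(\K)$: any matrix commuting with $E_{1,1}$ must be diagonal, and any diagonal matrix commuting with $E_{1,2}$ must be a scalar multiple of $I_2$. This immediately yields $t_2 \leq 3$.

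For the lower bound $t_2 \geq 3$, I would treat the two small cases. A subalgebra of dimension $1$ containing $I_2$ is forced to be $\Vect(I_2)$, whose centralizer is the whole of $\Mat_2(\K)$ and hence is non-trivial. A subalgebra $\calA$ of dimension $2$ containing $I_2$ must be of the form $\calA = \Vect(I_2, A)$ for some $A \notin \Vect(I_2)$; since $I_2$ and $A$ generate $\calA$ as an algebra and both commute with $A$, one has $A \in \calC(\calA)$. As $A \notin \Vect(I_2)$, this shows $\calC(\calA) \neq \Vect(I_2)$.

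There is no serious obstacle here: the proof combines the general computation $\calC(T_n(\K)) = \Vect(I_n)$ recalled in the introduction with the standard remark that a commutative non-trivial subalgebra is always contained in its own (hence non-trivial) centralizer. The only point worth noting is that the dimension-$2$ case is the genuine content of the lower bound, and it hinges on the fact that in dimension $2$ a subalgebra containing $I_n$ is automatically commutative of the form $\K[A]$, which fails in higher dimension and is precisely why the problem becomes interesting for $n \geq 3$.
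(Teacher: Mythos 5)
Your proof is correct and follows essentially the same route as the paper: the triviality of $\calC(T_2(\K))$ via the elementary-matrix argument already given for $T_n(\K)$, and the observation that a $2$-dimensional subalgebra containing $I_2$ is $\K[A]$ for some non-scalar $A$, so that $A$ itself lies in the centralizer. Nothing further is needed.
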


\noindent We will assume $n \geq 3$ from now on. Our main results are stated below:

\begin{theo}\label{maintheo}
If $n \geq 3$ is even, then $t_n(\K)=5$. \\
If $n \geq 3$ is odd, then $t_n(\K)=4$.
\end{theo}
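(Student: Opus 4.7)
My plan is to prove the theorem in two stages: upper bounds via explicit constructions, and lower bounds via structural analysis.

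\textbf{Upper bounds.} For odd $n = 2k+1$, I will decompose $\K^n = V_1 \oplus V_2$ with $\dim V_1 = k$ and $\dim V_2 = k+1$, and set
\[
\calA_{\mathrm{odd}} := \Vect\bigl(I_n,\ \pi_2,\ \widetilde{M}_1,\ \widetilde{M}_2\bigr),
\]
where $\pi_2$ is the projector onto $V_2$ along $V_1$ and $\widetilde{M}_i$ is the block-embedding $\bigl(\begin{smallmatrix} 0 & M_i \\ 0 & 0 \end{smallmatrix}\bigr)$ of $M_1 = (I_k \mid 0)$ and $M_2 = (0 \mid I_k)$ in $\Mat_{k,k+1}(\K)$. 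Closure is immediate because the $\widetilde{M}_i$ annihilate one another, so $\calA_{\mathrm{odd}}$ is $4$-dimensional. A direct centralizer computation proceeds in two steps: commutation with $\pi_2$ forces a candidate $X$ to be block-diagonal $X = \operatorname{diag}(X_{11}, X_{22})$, and then the two conditions $X_{11} M_i = M_i X_{22}$ cascade---the $M_1$ relation pastes $X_{11}$ into the top-left $k \times k$ sub-block of $X_{22}$, the $M_2$ relation pastes a shifted copy into the bottom-right---so that $X_{11}$ is forced to be Toeplitz with vanishing first row and last column, hence scalar, whence $X_{22}$ is scalar too. For even $n = 2k$, I will apply the same block ansatz with $V_1 = V_2 = \K^k$, this time letting the upper-right block range over the $3$-dimensional subspace $\calM := \Vect(I_k, E_{k,k}, B_k) \subseteq \Mat_k(\K)$, where $B_k$ denotes the cyclic permutation matrix; a short computation shows $\calC_{\Mat_k(\K)}(\calM) = \K I_k$, and the resulting $5$-dimensional subalgebra has centralizer $\K I_n$ by the same kind of analysis.

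\textbf{Lower bound $t_n \geq 4$.} For a $3$-dimensional subalgebra $\calA = \Vect(I_n, A, B)$, either $\calA$ is commutative and $\calC(\calA) \supseteq \calA$ has dimension at least $3$, or $\calA$ is non-commutative. A short Wedderburn analysis rules out $\dim J(\calA) \in \{0, 2, 3\}$ and shows, in the case $\dim J(\calA) = 1$, that $\calA/J$ cannot be a quadratic field extension $L$ (since no $\K$-algebra morphism $L \to \K$ exists), so $\calA \cong T_2(\K)$ is the only non-commutative possibility. I then decompose $\K^n$ as a $T_2(\K)$-module into copies of the simples $S_1, S_2$ (each $1$-dimensional) and the projective indecomposable $P$ (of dimension $2$). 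An elementary enumeration of the $\operatorname{Hom}$-spaces yields
\[
\dim \operatorname{End}_{T_2(\K)}\bigl(aS_1 \oplus bS_2 \oplus cP\bigr) = a^2 + b^2 + c^2 + ac + bc,
\]
which equals $1$ only for $(a, b, c) \in \{(1,0,0), (0,1,0), (0,0,1)\}$, i.e.\ only when $n \leq 2$.

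\textbf{Lower bound $t_n \geq 5$ for even $n$ (the main obstacle).} Suppose for contradiction that $\calA \subseteq \Mat_n(\K)$ is a $4$-dimensional subalgebra with $\calC(\calA) = \K I_n$ and $n$ even. I will consider the Wedderburn decomposition $\calA = \calS \oplus J$ with $\calS$ a maximal semisimple subalgebra and $J = \operatorname{rad}(\calA)$. The case $J = 0$ is killed immediately by the double centralizer theorem, which would force $\calA = \Mat_n(\K)$ and hence $n^2 = 4$. When $J \neq 0$, I will split into sub-cases according to $\dim \calS \in \{1, 2, 3\}$. In each case, the Peirce decomposition of $\K^n$ with respect to the primitive idempotents of $\calS$, together with the constraints imposed on how the $\calS$-bimodule $J$ of dimension $4 - \dim \calS$ interconnects the Peirce blocks to kill the centralizer, will produce parity conditions on the multiplicities of the isotypic components of $\K^n$ that are incompatible with $n$ even. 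I expect the most delicate sub-case to be $\calS \cong \K \times \K$ (so $\dim J = 2$), where the two off-diagonal Peirce components of $J$ must balance against the multiplicities of the two $\calS$-isotypic components of $\K^n$, and the asymmetric off-by-one block structure of the odd-$n$ construction turns out to be essentially the unique configuration compatible with a trivial centralizer.
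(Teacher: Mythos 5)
Your upper-bound constructions are correct and are the same block-triangular ansatz as the paper's (for even $n$ the paper takes the upper-right blocks in $\Vect(I_p,J_p,J_p^t)$ where you take $\Vect(I_k,E_{k,k},B_k)$; both spaces admit no non-scalar intertwiners, so either works). Your argument for $t_n\geq 4$ is a genuinely different route: the paper extends scalars to an algebraic closure, disposes of unispectral algebras via Burnside's theorem, and then runs a Peirce decomposition with a rank count, whereas you classify $3$-dimensional algebras abstractly (the only non-commutative one is $T_2(\K)$, since a $2$-dimensional radical forces commutativity and $\calA/J$ cannot be a quadratic field extension when $\dim J=1$) and then enumerate $\operatorname{End}_{T_2(\K)}(aS_1\oplus bS_2\oplus cP)$. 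This is clean, correct, and has the advantage of working over an arbitrary field with no scalar extension.

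The genuine gap is the lower bound $t_n\geq 5$ for even $n$, which is the heart of the theorem and which you have only outlined, not proved: ``will produce parity conditions'' and ``I expect the most delicate sub-case to be\dots'' are placeholders exactly where the work lies. Concretely, for $\calS\cong\K\times\K$ with $\dim J=2$ there are two configurations, and neither is a pure parity count. If $J$ meets both off-diagonal Peirce blocks, say in $U\in\Mat_{q,p}(\K)$ and $V\in\Mat_{p,q}(\K)$, there is no parity obstruction at all; one must exhibit a non-scalar centralizing element directly (the paper's device: $\operatorname{diag}(VU,UV)$ always centralizes, so triviality would force $VU=\lambda I_p$ and $UV=\lambda I_q$, and both the $\lambda\neq 0$ and $\lambda=0$ cases are then refuted separately). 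If $J$ lies in a single off-diagonal block, spanned by $U,V\in\Mat_{p,q}(\K)$ with $p\leq q$, one needs the quantitative estimate $\dim\Ker g\geq (q-p)^2+m$ for the intertwiner map $g(X,Y)=(XU-UY,\,XV-VY)$, where $m$ counts how many of $U,V$ have rank $<p$; this rests on the non-surjectivity of $(X,Y)\mapsto XW-WY$ when $\rk W<p\leq q$, and it is only here that parity enters, by forcing $q=p+1$. Even then the case $q=p$ with $U$ invertible survives the count and needs a separate argument (normalize $U=I_p$ and observe $\operatorname{diag}(V,V)$ centralizes). You must also actually treat $\dim\calS=1$ (the local/unispectral case, where a simultaneous strict triangularization of $J$ produces a rank-one centralizing matrix) and $\dim\calS=3$, and justify the Wedderburn--Malcev splitting $\calA=\calS\oplus J$ over a possibly imperfect field --- the paper sidesteps this by first passing to an algebraic closure via $t_n(\K)\geq t_n(\L)$, a reduction your even-case outline omits. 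Until these sub-cases are carried out, the even half of the theorem is not established.
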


\begin{prop}\label{existdimpaire}
Let $p \in \N \setminus \{0,1\}$ and consider the linear subspace
$\calF_{2p}$ of $\Mat_{2p}(\K)$ generated by the matrices
$$\begin{bmatrix}
I_p & 0 \\
0 & 0
\end{bmatrix},\begin{bmatrix}
0 & 0 \\
0 & I_p
\end{bmatrix},\begin{bmatrix}
0 & I_p \\
0 & 0
\end{bmatrix},\begin{bmatrix}
0 & J_p \\
0 & 0
\end{bmatrix}
 \; \textrm{and} \; \begin{bmatrix}
0 & J_p^t \\
0 & 0
\end{bmatrix}.$$
Then $\calF_{2p}$ is a $5$-dimensional subalgebra of $\Mat_{2p}(\K)$ with a trivial centralizer.
\end{prop}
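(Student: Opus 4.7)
The plan is to verify the three assertions in turn: linear independence of the five generators (giving dimension $5$), closure under multiplication (giving a subalgebra containing $I_{2p}=D_1+D_2$), and triviality of the centralizer.

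For dimension, I would argue that the first two generators $D_1,D_2$ have their nonzero entries confined to the diagonal blocks while the other three lie entirely in the upper-right block, so linear independence reduces to the independence of $I_p, J_p, J_p^t$. Since $p\geq 2$, these three matrices are supported on three distinct diagonals (main, first super, first sub), so they are linearly independent.

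For closure, I would write the generators as $D_1,D_2$ (the two diagonal block projectors) and $N_1,N_2,N_3$ (the three upper-right block matrices), and run through all products. The key observations are: (i) any product of two matrices of type $N_i$ vanishes because the upper-right block of each factor makes the product strictly upper block triangular of nilpotency index $2$; (ii) $D_1^2=D_1$, $D_2^2=D_2$, $D_1D_2=D_2D_1=0$, $D_1+D_2=I_{2p}$; and (iii) $D_1 N_i=N_i D_2=N_i$ while $D_2 N_i=N_i D_1=0$. All products thus lie in $\Vect(D_1,D_2,N_1,N_2,N_3)=\calF_{2p}$.

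For the centralizer, I would take an arbitrary $A=\begin{bmatrix} A_1 & A_2 \\ A_3 & A_4\end{bmatrix}\in\calC(\calF_{2p})$. Commutation with $D_1$ forces $A_2=0$ and $A_3=0$. Commutation with $N_1$ then yields $A_1=A_4$, and commutation with $N_2$ and $N_3$ gives $A_1J_p=J_pA_1$ and $A_1J_p^t=J_p^tA_1$. So $A_1$ lies in $\calC(J_p)\cap \calC(J_p^t)$. The main step is to show this intersection is $\K\, I_p$: matrices commuting with $J_p$ are exactly polynomials in $J_p$, which are upper-triangular Toeplitz matrices; similarly $\calC(J_p^t)$ consists of lower-triangular Toeplitz matrices; a matrix that is both upper and lower triangular Toeplitz is necessarily diagonal and constant along that diagonal, hence scalar. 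This gives $A_1=\lambda I_p$ and therefore $A=\lambda I_{2p}$, so $\calC(\calF_{2p})=\Vect(I_{2p})$.

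The only mildly delicate point is the double-centralizer computation $\calC(J_p)\cap\calC(J_p^t)=\K I_p$; everything else is a bookkeeping check of block products. No step seems to present a genuine obstacle.
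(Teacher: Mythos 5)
Your proof is correct and follows essentially the same route as the paper: the paper packages the subalgebra and block-diagonal-centralizer reduction into a small general construction (a subspace $V\subset\Mat_{p,q}(\K)$ sitting in the upper-right block of a block upper triangular algebra), and isolates your key step as Lemma \ref{commutantsubspace}, proved exactly as you do via $\calC(J_p)=\K[J_p]$ being upper triangular and $\calC(J_p^t)$ lower triangular. No substantive difference.
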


\begin{prop}\label{existdimimpaire}
Let $p \in \N^*$.
Consider the matrices
$C_p=\begin{bmatrix}
I_p & 0
\end{bmatrix}$ and
$D_p=\begin{bmatrix}
0 & I_p
\end{bmatrix}$ in $\Mat_{p,p+1}(\K)$, and define $\calH_{2p+1}$
as the linear subspace of $\Mat_{2p+1}(\K)$ generated by the matrices
$$\begin{bmatrix}
I_p & 0 \\
0 & 0
\end{bmatrix},
\begin{bmatrix}
0 & 0  \\
0 & I_{p+1}
\end{bmatrix},
\begin{bmatrix}
0 & C_p  \\
0 & 0
\end{bmatrix}
\;\textrm{and} \;
\begin{bmatrix}
0 & D_p  \\
0 & 0
\end{bmatrix}.$$
Then $\calH_{2p+1}$ is a $4$-dimensional subalgebra of $\Mat_{2p+1}(\K)$ with a trivial centralizer.
\end{prop}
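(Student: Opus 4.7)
The plan is to verify three things for $\calH_{2p+1}$: linear independence of the four listed generators, closure of their span under multiplication (with containment of $I_{2p+1}$), and triviality of the centralizer. Denote the generators by
$$P = \begin{bmatrix} I_p & 0 \\ 0 & 0 \end{bmatrix}, \quad Q = \begin{bmatrix} 0 & 0 \\ 0 & I_{p+1} \end{bmatrix}, \quad R = \begin{bmatrix} 0 & C_p \\ 0 & 0 \end{bmatrix}, \quad S = \begin{bmatrix} 0 & D_p \\ 0 & 0 \end{bmatrix}.$$
Linear independence will be immediate from the positions of their non-zero entries. The algebra structure will follow from routine block multiplication: $P$ and $Q$ are orthogonal idempotents with $P + Q = I_{2p+1}$, while $R$ and $S$ are of square zero with $RS = SR = 0$; the mixed products reduce to $PR = RQ = R$, $RP = QR = 0$, and the analogous identities for $S$. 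Thus the four-dimensional span is closed.

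The substantive part is the triviality of the centralizer. Given $A \in \calC(\calH_{2p+1})$, commuting with $P$ (equivalently with $Q$) will first force $A$ to be block-diagonal relative to the $p + (p+1)$ decomposition, say $A = A_{11} \oplus A_{22}$. Commutation with $R$ and $S$ will then translate into the rectangular identities
$$A_{11} C_p = C_p A_{22} \quad \text{and} \quad A_{11} D_p = D_p A_{22}.$$
Viewing $C_p$ and $D_p$ as the projections of $\K^{p+1}$ onto its first and its last $p$ coordinates respectively, these identities explicitly prescribe $A_{22}$ in terms of $A_{11}$: its first $p$ rows must equal $\begin{bmatrix} A_{11} & 0 \end{bmatrix}$ and its last $p$ rows must equal $\begin{bmatrix} 0 & A_{11} \end{bmatrix}$.

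The decisive step will then be to exploit the overlap of these two descriptions on rows $2,\dots,p$. This will yield two constraints on $A_{11}$: a shift relation $(A_{11})_{i,j} = (A_{11})_{i-1,j-1}$ for $2 \leq i,j \leq p$ (so $A_{11}$ has constant diagonals on its interior), together with the vanishing of the strictly lower part of the first column and of the strictly upper part of the last column of $A_{11}$. A diagonal-by-diagonal inspection will propagate each boundary zero along its diagonal via the shift relation, killing every off-diagonal entry, while the shift relation forces the main diagonal to be constant. Hence $A_{11} = \lambda I_p$ for some $\lambda \in \K$; plugging back will give $A_{22} = \lambda I_{p+1}$, and so $A = \lambda I_{2p+1}$.

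The hard part will be precisely this diagonal-by-diagonal propagation, and it is what justifies the asymmetric choice of $C_p$ and $D_p$ in the construction: the offset by one coordinate between the two projections is exactly what ensures that every off-main diagonal of $A_{11}$ meets at least one of the two boundary-vanishing zones and can thereby be annihilated. Any more symmetric choice would leave some diagonals uncontrolled and would not yield a trivial centralizer.
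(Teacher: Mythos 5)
Your proposal is correct and follows essentially the same route as the paper: both reduce the centralizer condition to the two intertwining identities $A_{11}C_p=C_pA_{22}$ and $A_{11}D_p=D_pA_{22}$, equate the two resulting descriptions of $A_{22}$ on their overlap, and deduce $A_{11}=\lambda I_p$. The only cosmetic difference is that you propagate the boundary zeros along diagonals, whereas the paper runs the same induction column by column.
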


\noindent Finally, we will prove that the latter example is essentially unique:

\begin{prop}\label{unicity}
Let $p \in \N^*$ and $\calA$ be a subalgebra of $\Mat_{2p+1}(\K)$ of dimension $4$ with a trivial centralizer.
Then $\calA$ is \textbf{conjugate} to either $\calH_{2p+1}$ or its transposed subalgebra $\calH_{2p+1}^t$,
i.e.\ there is a non-singular matrix $P \in \GL_{2p+1}(\K)$ such that
$$\calA=P\,\calH_{2p+1}\,P^{-1} \quad \text{or} \quad \calA=P\,\calH_{2p+1}^t\,P^{-1}.$$
\end{prop}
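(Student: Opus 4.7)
My plan is to extract an idempotent from $\calA$, perform a Peirce decomposition, and then classify the resulting two-dimensional subspace of rectangular matrices via Kronecker's normal form for matrix pencils.

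First, $\calA$ must be noncommutative, for otherwise $\calA \subseteq \calC(\calA)$ would force $\dim \calC(\calA) \geq 4$. To locate a non-trivial idempotent I argue by contradiction, assuming $\calA$ is local with Jacobson radical $\calR$ and residue field $F = \calA/\calR$. The $F$-vector space structure on each successive quotient $\calR^i\K^n/\calR^{i+1}\K^n$ shows $[F:\K] \mid n = 2p+1$, and since $[F:\K]\leq 4$ this leaves $[F:\K]\in\{1,3\}$. If $[F:\K]=3$, then $\dim\calR = 1$ and $\calR$ is a one-dimensional $F$-bimodule, forcing an impossible $\K$-algebra map $F \to \K$. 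If $[F:\K]=1$, then $\calR$ is a three-dimensional noncommutative nilpotent algebra; a brief classification shows $\dim\calR^2 = 1$, whence $\K + \calR^2 \subseteq Z(\calA) \subseteq \calC(\calA)$ has dimension at least $2$, contradicting triviality.

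Next, given a non-trivial idempotent $e \in \calA$ with $f = I_n - e$, I would analyze the Peirce dimension vector $(a,b,c,d) = (\dim e\calA e, \dim f\calA f, \dim e\calA f, \dim f\calA e)$, where $a+b+c+d = 4$ and $a,b \geq 1$. The case $c = d = 0$ gives a block-diagonal $\calA$ with $\K e \oplus \K f \subseteq \calC(\calA)$. If $a \geq 2$ (and by symmetry if $b \geq 2$), a non-scalar $x \in e\calA e$ of block form $X \oplus 0$ acts by some scalar $\gamma$ on the one-dimensional pieces, and $X \oplus \gamma I_s$ is then a non-scalar centralizer element. So $a = b = 1$. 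The case $(1,1,1,1)$ splits in two: if both $uv$ and $vu$ are nonzero then $\calA \cong \Mat_2(\K)$, forcing $n$ even, contradiction; if both vanish then choosing complements $\K^r = \im u \oplus X'$ and $\K^s = \im v \oplus Y'$ produces an explicit two-parameter family in $\calC(\calA)$. Hence $(c,d) \in \{(2,0), (0,2)\}$, and after transposing $\calA$ if necessary I reduce to $(c,d) = (2,0)$.

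In this final case set $r = \rk e$, $s = \rk f$, and $V = e\calA f \subseteq \Mat_{r,s}(\K)$, which satisfies $V \cdot V = 0$; the whole multiplication table of $\calA$ then becomes automatic. The triviality of $\calC(\calA)$ amounts to saying the space $\{(A_1, A_2) \in \Mat_r \oplus \Mat_s : A_1 v = v A_2 \text{ for all } v \in V\}$ is one-dimensional. A dimension count $r^2 + s^2 - 1 \leq 2rs$ gives $|r - s| \leq 1$, and since $r + s = 2p+1$ is odd, $\{r,s\} = \{p, p+1\}$; I fix $r = p$, $s = p+1$ (the alternative produces $\calH_{2p+1}^t$). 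Now $V$ defines a pencil of $p \times (p+1)$ matrices, which by Kronecker's normal form is $\GL_p(\K) \times \GL_{p+1}(\K)$-equivalent to a direct sum of singular ($L_k$, $L_k^T$) and regular blocks; each block contributes at least one dimension to the pencil's centralizer, so triviality forces a single block. The only single-block option of shape $p \times (p+1)$ is $L_p$, which is exactly $\K C_p + \K D_p$. Conjugating $\calA$ by the resulting block-diagonal matrix $P_1 \oplus P_2 \in \GL_{2p+1}(\K)$ puts it in standard form $\calH_{2p+1}$.

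The main obstacle is the existence step: the local-algebra analysis combines a divisibility argument that depends crucially on $n$ being odd with a classification of three-dimensional nilpotent algebras, and requires some attention to inseparable extensions in positive characteristic. A secondary difficulty is the subcase $uv = vu = 0$ of $(1,1,1,1)$, where one must construct non-scalar centralizer elements by hand using the decompositions of $\K^r$ and $\K^s$ into images and chosen complements; by contrast, the Kronecker step is essentially mechanical once the pencil shape is known.
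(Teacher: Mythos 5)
Your proof is correct, and it diverges from the paper's in the two steps that carry the real weight. For the existence of a non-trivial idempotent, the paper extends scalars to an algebraic closure $\L$, runs the whole Burnside/unispectral/Peirce analysis over $\L$ to pin down the form of $\calA_\L$, and then descends: it shows the unispectral elements of $\calA_\L$ form a hyperplane, picks a non-unispectral $M\in\calA$, and uses the minimal polynomial and the trace identity $\tr(M)=n(\lambda+\mu)+\mu$ (which is where the oddness of $n$ enters) to see that the two eigenvalues lie in $\K$, yielding a rank-$p$ idempotent. Your local-algebra argument — noncommutativity of $\calA$, the divisibility $[\calA/\calR:\K]\mid 2p+1$ forcing $[F:\K]\in\{1,3\}$, the impossibility of a unital map $F\to\K$ when $[F:\K]=3$, and $\K I_n\oplus\calR^2\subseteq\calC(\calA)$ when $[F:\K]=1$ — is a genuinely different route that avoids scalar extension and descent altogether; it is self-contained and arguably cleaner, at the cost of the small classification of $3$-dimensional nilpotent algebras (your worry about inseparability is unfounded: nothing in the divisibility or module arguments is sensitive to it). The middle Peirce case analysis is essentially the paper's Sections 3.4--3.6 restated abstractly. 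In the final step the paper first proves (Lemma 7 plus another scalar extension) that every nonzero element of $\calP_\L$ has rank $p$, and only then applies the Kronecker--Weierstrass reduction to exclude all blocks but a single $L_p$; you instead observe directly that each Kronecker block contributes at least one dimension to the space $\{(A_1,A_2): A_1v=vA_2 \text{ for all } v\in\calP\}$, so triviality of $\calC(\calA)$ forces a single block, which by shape must be $L_p$. This bypasses the rank computation entirely and is a slight simplification, provided you cite a version of the Kronecker decomposition valid over an arbitrary field (as the paper does via its reference \cite{dSPsim}).
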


\begin{Rem}
It is an easy exercise to prove that $\calH_{2p+1}$ and $\calH_{2p+1}^t$ are not conjugate one to the other.
\end{Rem}

\section{Checking the examples}\label{proofexamples}

We will start with a little lemma.

\begin{lemme}\label{commutantsubspace}
Let $n \geq 2$ be an integer. Then
$\Vect(J_n,J_n^t)$ has a trivial centralizer.
\end{lemme}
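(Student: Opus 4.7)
The plan is to combine two classical facts about the centralizers of the nilpotent Jordan block and its transpose, and then to intersect them.

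First, I would compute $\calC(J_n)$ directly. Writing $A=(a_{i,j})_{1 \leq i,j \leq n}$ and using that the $(k,j)$ entry of $J_n$ is $\delta_{k+1,j}$, the identity $AJ_n=J_nA$ translates into the relations
\[
a_{i,j-1}=a_{i+1,j} \quad \text{for all } (i,j)\in\lcro 1,n\rcro^2,
\]
with the convention that $a_{i,0}=0$ and $a_{n+1,j}=0$. This immediately forces $A$ to vanish strictly below the diagonal (by induction on $j-i$ starting from $j=1$) and to be constant along each diagonal parallel to the main one. In other words, $A$ is an upper triangular Toeplitz matrix, so
\[
\calC(J_n)=\K[J_n]=\Vect(I_n,J_n,\dots,J_n^{n-1}).
\]

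Next, by transposing everything (or, equivalently, by redoing the same calculation with $J_n$ replaced by $J_n^t$), one obtains
\[
\calC(J_n^t)=\K[J_n^t]=\Vect(I_n,J_n^t,\dots,(J_n^t)^{n-1}),
\]
the space of lower triangular Toeplitz matrices.

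Finally, since $\calC\bigl(\Vect(J_n,J_n^t)\bigr)=\calC(J_n)\cap\calC(J_n^t)$, a matrix in this intersection must be simultaneously upper triangular Toeplitz and lower triangular Toeplitz, and hence is diagonal with a constant diagonal, i.e.\ a scalar multiple of $I_n$. This yields the conclusion. There is no real obstacle here: the only mildly delicate point is recalling the standard description of the centralizer of a single Jordan block, and the rest is a one-line intersection argument.
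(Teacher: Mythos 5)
Your proof is correct and follows essentially the same route as the paper: identify $\calC(J_n)=\K[J_n]$ as the upper triangular (Toeplitz) matrices, $\calC(J_n^t)=\K[J_n^t]$ as the lower triangular ones, and intersect. The only difference is that you verify the description of $\calC(J_n)$ by direct entry-wise computation, whereas the paper cites the standard fact that the centralizer of a cyclic matrix is the algebra it generates.
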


\begin{proof}
Since $J_n$ is cyclic, its centralizer is $\K[J_n]$ (see \cite{Comm} Theorem 5 p.23),
and it thus contains only upper triangular matrices with equal diagonal entries.
Similarly, every matrix of $\K[J_n^t]$ is lower triangular.
It follows that every matrix in the centralizer of $\Vect(J_n,J_n^t)$
must be scalar.
\end{proof}

The examples featured in Propositions \ref{existdimpaire} and \ref{existdimimpaire} are based upon the
same idea. Consider a decomposition $n=p+q$ and a linear subspace $V$ of
$\Mat_{p,q}(\K)$. It is then easily checked that
$$\calH:=\Biggl\{
\begin{bmatrix}
a.I_p & K \\
0 & b.I_q
\end{bmatrix} \mid (a,b)\in \K^2, \; K \in V\Biggr\}$$
is always a subalgebra of $\Mat_n(\K)$ with dimension $\dim V+2$.
Straightforward computation also shows that the centralizer of the matrix
$P=\begin{bmatrix}
I_p & 0 \\
0 & 0
\end{bmatrix}$ is
$$\calC(P)=\Biggl\{\begin{bmatrix}
X & 0 \\
0 & Y
\end{bmatrix} \mid (X,Y)\in \Mat_p(\K) \times \Mat_q(\K)\Biggr\}.$$
Let $M=\begin{bmatrix}
X & 0 \\
0 & Y
\end{bmatrix} \in \calC(P)$. Then $M$ belongs to $\calC(\calH)$ if and only if
$\forall K \in V, \; X\,K=K\,Y$. Of course, this last relation need only be tested on
a basis of $V$.

\noindent From there, our claims may easily be proven.

\vskip 2mm
\noindent
\textbf{The example in Proposition \ref{existdimpaire}.} \\
Here, $q=p$ and $V=\Vect(I_p,J_p,J_p^t)$. \\
Let $(X,Y)\in \Mat_p(\K)^2$ such that $XI_p=I_pY$, $XJ_p=J_pY$ and $X\,J_p^t=J_p^t\,Y$.
Then $Y=X$ and $X$ commutes with both $J_p$ and $J_p^t$. By Lemma \ref{commutantsubspace}, $X$ is a scalar multiple of $I_p$ hence
$\begin{bmatrix}
X & 0 \\
0 & Y
\end{bmatrix}$ is a scalar multiple of $I_n$. This proves Proposition \ref{existdimpaire}.

\vskip 2mm
\noindent
\textbf{The example in Proposition \ref{existdimimpaire}.} \\
Here $q=p+1$ and $V=\Vect(C_p,D_p)$. \\
Let $(X,Y)\in \Mat_p(\K) \times \Mat_{p+1}(\K)$ such that
$\begin{bmatrix}
X & 0 \\
0 & Y
\end{bmatrix} \in \calC(\calH)$. \\
The identity $XC_p=C_pY$ entails that
$Y=\begin{bmatrix}
X & 0 \\
L & \alpha
\end{bmatrix}$ for some $(\alpha,L)\in \K \times \Mat_{1,p}(\K)$, whilst
identity $XD_p=D_pY$ shows that
$Y=\begin{bmatrix}
\beta & L' \\
0 & X
\end{bmatrix}$ for some $(\beta,L')\in \K \times \Mat_{1,p}(\K)$. We thus have
$$\begin{bmatrix}
X & 0 \\
L & \alpha
\end{bmatrix}=\begin{bmatrix}
\beta & L' \\
0 & X
\end{bmatrix}.$$
  Starting from the first column of $X$, an easy induction shows that $X$ is upper triangular with all diagonal entries equal to $\beta$. Also, starting from the last column of $X$, an easy induction shows that $L'=0$ and $X$ is lower triangular.
This yields $X=\beta.I_p$ and $Y=\beta.I_{p+1}$ hence $\begin{bmatrix}
X & 0 \\
0 & Y
\end{bmatrix}=\beta.I_{2p+1}$, which proves Proposition \ref{existdimimpaire}.

\section{A lower bound for $t_n(\K)$}

\subsection{Introduction}

Here, we will prove that $t_n \geq 4$ when $n$ is odd, and $t_n \geq 5$ when $n$ is even.
In other words, we will prove that every subalgebra of $\Mat_n(\K)$ has a non-trivial centralizer
provided it has dimension $p \leq 3$ when $n$ is odd, and dimension $p \leq 4$ when $p$ is even.
The proof is essentially laid out as follows:
\begin{itemize}
\item by extending the ground field, we reduce the study to the case of an algebraically closed field;
\item in this case, we discard all \emph{unispectral} subalgebras (i.e.\ subalgebras in which every operator has a sole eigenvalue);
\item in the remaining cases, the considered subalgebra contains a non-trivial idempotent which we use
to split the algebra $\calA$ into several remarkable subspaces; we then use that splitting to find a non-scalar matrix in the centralizer of $\calA$ when $\dim \calA$ is small enough.
\end{itemize}

\noindent From now on, we set an integer $n \geq 3$ and a subalgebra $\calA$ of $\Mat_n(\K)$.
The following elementary facts will be used repeatedly:
\begin{itemize}
\item for every $P \in \GL_n(\K)$, the conjugate subalgebra $P\,\calA\,P^{-1}$
has the same dimension as $\calA$ and its centralizer $P\,\calC(\calA)\,P^{-1}$ is trivial if and only if
$\calC(\calA)$ is trivial.
\item the transposed subalgebra $\calA^t:=\bigl\{M^t \mid M \in \calA\bigr\}$
has the same dimension as $\calA$ and its centralizer $C(\calA)^t$ is trivial
if and only if $C(\calA)$ is trivial.
\end{itemize}

\subsection{Reduction to the case of an algebraically closed field}\label{red}

Let $\L$ be a field extension of $\K$.
Recall that when $\calA$ is a subalgebra of $\Mat_n(\K)$
and we let $\calA_\L$ denote the linear $\L$-subspace of $\Mat_n(\L)$ generated by
$\calA$, then $\calA_\L$ is a subalgebra of $\Mat_n(\L)$.
The natural isomorphism of $\L$-algebras $\Mat_n(\L) \simeq \Mat_n(\K) \otimes_\K \L$
maps $\calA_\L$ to $\calA \otimes_\K \L$ hence $\dim_\L \calA_\L=\dim_\K \calA$.
Also, the centralizer of $\calA \otimes_\K \L$ in $\Mat_n(\K) \otimes_\K \L$
is clearly $\calC(\calA) \otimes_\K \L$, therefore
$\calA$ has a trivial centralizer in $\Mat_n(\K)$ if and only if
$\calA_\L$ has a trivial centralizer in $\Mat_n(\L)$.
We deduce that
$$t_n(\K) \geq t_n(\L).$$
Therefore, by Steinitz's theorem and the examples discussed earlier, it will suffice to prove theorem \ref{maintheo} when $\K$ is algebraically closed.

\vskip 3mm
\begin{center}
In the rest of this section, we assume $\K$ is algebraically closed.
\end{center}

\subsection{The case $\calA$ is unispectral}\label{unispect}

\begin{Def}
We call a matrix $A\in \Mat_n(\K)$ \textbf{unispectral} when it has a sole eigenvalue. \\
A subalgebra of $\Mat_n(\K)$ is called unispectral when all its elements are unispectral.
\end{Def}

The standard example is the subalgebra of matrices of the form $\lambda.I_n+T$ for some strictly upper triangular matrix $T$.
Conversely, every unispectral subalgebra is conjugate to a subalgebra of the preceding one:

\begin{prop}
Let $\calA$ be a unispectral subalgebra of $\Mat_n(\K)$. Then there is a non-singular $P \in \GL_n(\K)$
such that $P\calA P^{-1} \subset T_n(\K)$.
\end{prop}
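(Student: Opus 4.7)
The plan is to proceed by induction on $n$, building a common $\calA$-invariant complete flag in $\K^n$. The base case $n=1$ is immediate, since $\Mat_1(\K)=T_1(\K)$.

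For the inductive step with $n \geq 2$, the first task is to exhibit a proper nonzero $\calA$-invariant subspace $V \subsetneq \K^n$. I would argue by contradiction using Burnside's density theorem, which is available because we have reduced to the algebraically closed case in Section \ref{red}. If $\calA$ acted irreducibly on $\K^n$, Burnside's theorem would force $\calA=\Mat_n(\K)$; but for $n \geq 2$ the matrix $E_{1,1}$ lies in $\Mat_n(\K)$ and has spectrum $\{0,1\}$, contradicting unispectrality. Hence such a $V$ exists.

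Next, I would choose a basis of $\K^n$ whose first $k:=\dim V$ vectors span $V$. In that basis, every $M \in \calA$ takes the block upper-triangular form
\[
M=\begin{bmatrix} M_1 & \star \\ 0 & M_2 \end{bmatrix}, \qquad M_1 \in \Mat_k(\K),\ M_2 \in \Mat_{n-k}(\K).
\]
The two projections $M \mapsto M_1$ and $M \mapsto M_2$ yield unital subalgebras $\calA_1 \subset \Mat_k(\K)$ and $\calA_2 \subset \Mat_{n-k}(\K)$. The characteristic polynomials of $M_1$ and $M_2$ both divide that of $M$, so unispectrality is inherited: $\calA_1$ and $\calA_2$ are again unispectral. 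The induction hypothesis then provides $Q_1 \in \GL_k(\K)$ and $Q_2 \in \GL_{n-k}(\K)$ conjugating $\calA_1$ and $\calA_2$ into $T_k(\K)$ and $T_{n-k}(\K)$, respectively; conjugating $\calA$ by the block matrix $\begin{bmatrix} Q_1 & 0 \\ 0 & Q_2 \end{bmatrix}$ (on top of the initial change of basis) produces the desired $P \in \GL_n(\K)$ with $P\,\calA\,P^{-1} \subset T_n(\K)$.

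The main hurdle is the invocation of Burnside's theorem; the rest is routine bookkeeping with block matrices, although one must be careful that $\calA_1$ and $\calA_2$ are honest unital subalgebras (they are, as images of a unital algebra homomorphism). A slightly cleaner variant would avoid explicit recursion by taking a composition series of $\K^n$ as an $\calA$-module and arguing, via Burnside applied to each simple factor, that every composition factor is one-dimensional; the resulting flag triangularizes $\calA$ in one stroke.
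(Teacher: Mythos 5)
Your proof is correct and follows essentially the same route as the paper: induction on $n$, Burnside's theorem to produce a proper invariant subspace (the paper likewise notes $\calA \subsetneq \Mat_n(\K)$ because a unispectral algebra cannot be all of $\Mat_n(\K)$), block upper-triangularization, and the induction hypothesis applied to the two diagonal-block algebras. The extra details you supply (the $E_{1,1}$ witness, the divisibility of characteristic polynomials) are fine, and the composition-series variant you sketch is just a repackaging of the same argument.
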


\begin{proof}
We use Burnside's theorem (see \cite{Jac2} p.213) by induction on $n$.
The case $n=1$ is trivial.
Assume $n \geq 2$ and our claim holds for every non-negative integer $p<n$ and every unispectral subalgebra of $\Mat_p(\K)$.
Let $\calA$ be a unispectral subalgebra of $\Mat_n(\K)$. Clearly, $\calA \subsetneq \Mat_n(\K)$
hence Burnside's theorem shows there is a non-singular $P \in \GL_n(\K)$ and an integer $p \in \lcro 1,n-1\rcro$
such that every $M \in \calA$ splits as
$$M=P^{-1}\,\begin{bmatrix}
A(M) & * \\
0 & B(M)
\end{bmatrix}P \quad \text{where $A(M) \in \Mat_p(\K)$ and $B(M) \in \Mat_{n-p}(\K)$.}$$
Then $\calA_1:=\bigl\{A(M) \mid M \in \calA\bigr\}$ (resp.\ $\calA_2:=\bigl\{B(M) \mid M \in \calA\bigr\}$)
is a unispectral subalgebra of $\Mat_p(\K)$ (resp.\ of $\Mat_{n-p}(\K)$).
Using the induction hypothesis, there are non-singular matrices $P_1 \in \GL_p(\K)$ and $P_2 \in \GL_{n-p}(\K)$ such that
$P_1\calA_1 P_1^{-1} \subset T_p(\K)$ and $P_2\calA_2 P_2^{-1} \subset T_{n-p}(\K)$.
Setting $Q:=\begin{bmatrix}
P_1 & 0 \\
0 & P_2
\end{bmatrix}$, we have $Q \in \GL_n(\K)$ and $(QP)\calA (QP)^{-1} \subset T_n(\K)$.
\end{proof}

\begin{cor}
Let $p \geq 2$ be an integer.
Then every unispectral subalgebra of $\Mat_p(\K)$ has a non-trivial centralizer.
\end{cor}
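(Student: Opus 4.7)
The plan is to exploit the preceding proposition to reduce to the case where $\calA \subset T_p(\K)$ and then exhibit an explicit non-scalar matrix in the centralizer.

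First, I would observe that the property of having a trivial centralizer is invariant under conjugation (as noted in the paper), so by the proposition just proved, we may assume $\calA \subset T_p(\K)$ after a suitable conjugation. The key additional structural fact is that an upper triangular matrix has its diagonal entries as its eigenvalues (with multiplicities); since every $M \in \calA$ is unispectral with sole eigenvalue $\lambda(M)$, it follows that the diagonal of $M$ is $\lambda(M) I_p$. In other words, every element of $\calA$ can be written uniquely as
\[
M = \lambda(M)\, I_p + N(M),
\]
where $N(M)$ is strictly upper triangular.

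Next, I would look for a candidate non-scalar matrix in $\calC(\calA)$. The natural choice is the rank-one matrix $E_{1,p}$: for any strictly upper triangular $N$, a direct check shows that $NE_{1,p}=0$ (since $E_{1,p}$ has support only in column $p$, and the first column of any strictly upper triangular matrix is zero) and $E_{1,p}N=0$ (since the first row of $E_{1,p}N$ is the last row of $N$, which is zero). Consequently, for any $M = \lambda(M) I_p + N(M) \in \calA$,
\[
M E_{1,p} = \lambda(M)\, E_{1,p} = E_{1,p}\, M,
\]
so $E_{1,p}$ lies in $\calC(\calA)$. Since $p \geq 2$, the matrix $E_{1,p}$ is clearly not a scalar multiple of $I_p$, hence $\calC(\calA)$ is non-trivial, and this conclusion transports back through the conjugation to the original $\calA$.

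There is no serious obstacle here: the entire argument hinges on the reduction supplied by the previous proposition together with the observation that unispectrality combined with upper-triangularity forces constant diagonals. The mildest subtlety is simply checking that the two products $NE_{1,p}$ and $E_{1,p}N$ vanish for every strictly upper triangular $N$, which is a one-line index calculation.
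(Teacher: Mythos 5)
Your proof is correct and follows essentially the same route as the paper: reduce to a unispectral subalgebra of $T_p(\K)$ via the preceding proposition, observe that unispectrality forces constant diagonals, and check that $E_{1,p}$ centralizes everything. The paper leaves the final verification as "it easily follows"; your index computation simply fills in that detail.
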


\begin{proof}
It suffices to prove the statement for any unispectral subalgebra $\calA$ of $T_p(\K)$.
However any matrix $M$ of $\calA$ must have identical diagonal entries and must be upper triangular:
it easily follows that the elementary matrix $E_{1,n}$ lies in $\calC(\calA)$.
\end{proof}

\noindent
In what follows, we will assume $\calA$ is not unispectral.

\subsection{The basic splitting}\label{split}

Let us choose a non-unispectral matrix $M \in \calA$.
Choose then a spectral projection $P$ associated to $M$: hence $P \in \K[M] \subset \calA$
(see \cite{Chamb} chapter 8, §4 corollary 3 p.271) and we have therefore found a non-trivial idempotent in $\calA$.
By conjugating $\calA$ with an appropriate non-singular matrix, we are reduced to the case $\calA$
contains, for some $p \in \lcro 1,n-1\rcro$, the idempotent matrix
$$P:=\begin{bmatrix}
I_p & 0 \\
0 & 0
\end{bmatrix} \in \Mat_n(\K).$$

\begin{Rem}
The rest of the proof will only rely on this assumption and the condition on the dimension of $\calA$.
In particular, the reader will check that it does not use the fact that $\K$ is algebraically closed.
\end{Rem}

\noindent In what follows, we set $q:=n-p$. Notice then that $\calA$ also contains $Q:=I_n-P=\begin{bmatrix}
0 & 0 \\
0 & I_q
\end{bmatrix}$.
Since $\calA$ is a subalgebra containing both $P$ and $Q$, one clearly has:
$$P\,\calA\,P=\Biggl\{
\begin{bmatrix}
M & 0 \\
0 & 0
\end{bmatrix}
\mid M \in \Mat_p(\K)\Biggr\} \,\cap \, \calA \quad, \quad
P\,\calA\,Q=\Biggl\{
\begin{bmatrix}
0 & M \\
0 & 0
\end{bmatrix}
\mid M \in \Mat_{p,q}(\K)\Biggr\} \,\cap \, \calA,$$
$$Q\,\calA\,P=\Biggl\{
\begin{bmatrix}
0 & 0 \\
M & 0
\end{bmatrix}
\mid M \in \Mat_{q,p}(\K)\Biggr\} \,\cap \, \calA, \quad \textrm{and} \quad
Q\,\calA\,Q=\Biggl\{
\begin{bmatrix}
0 & 0 \\
0 & M
\end{bmatrix}
\mid M \in \Mat_q(\K)\Biggr\} \,\cap \, \calA.$$
Those four linear subspaces of $\calA$ will respectively be denoted by
$\calA_{1,1}$, $\calA_{1,2}$, $\calA_{2,1}$ and $\calA_{2,2}$.
For every $M \in \Mat_n(\K)$, write:
$$M=I_n\,M\,I_n=(P+Q)\,M\,(P+Q)
=P\,M\,P+P\,M\,Q+Q\,M\,P+Q\,M\,Q.$$
It follows that
$$\calA=\calA_{1,1} \oplus \calA_{1,2} \oplus \calA_{2,1} \oplus \calA_{2,2}$$
hence
$$\dim \calA=\dim \calA_{1,1}+\dim \calA_{1,2}+\dim \calA_{2,1}+\dim \calA_{2,2.}$$
Notice also that $P \in \calA_{1,1}$ and $Q \in \calA_{2,2}$, so that
$$\dim \calA_{1,1} \geq 1 \quad \textrm{and} \quad \dim \calA_{2,2} \geq 1.$$
In what follows, we will discuss various cases depending on the respective dimensions of the $\calA_{i,j}$'s.
One case can readily be done away:
if $\calA_{1,2}=\{0\}$ and $\calA_{2,1}=\{0\}$, then $P$ belongs to $\calC(\calA) \setminus \Vect(I_n)$.

\begin{center}
We will now assume $\calA_{1,2} \neq \{0\}$ or $\calA_{2,1}\neq \{0\}$.
\end{center}

\subsection{The case $\dim \calA=3$}\label{dim3}

It only remains to investigate the case one of $\calA_{2,1}$ and $\calA_{1,2}$
has dimension $1$ and the other $0$. Transposition of $\calA$ only leaves us with the case
$\calA_{2,1}=\{0\}$ and $\calA_{1,2}$ is generated by some $C \in \Mat_{p,q}(\K) \setminus \{0\}$.
Hence $\calA_{1,1}=\Vect(I_p)$ and $\calA_{2,2}=\Vect(I_q)$, so the
considerations of Section \ref{proofexamples} show that, in order to prove that $\calC(\calA)$ is non-trivial,
it will suffice to prove that, for some pair  $(X,Y) \in \Mat_p(\K) \times \Mat_q(\K)$, one has $XC=CY$ and 
$\begin{bmatrix}
X & 0 \\
0 & Y
\end{bmatrix}$ is not a scalar multiple of $I_n$.
Consider then the linear map:
$$f : \begin{cases}
\Mat_p(\K) \times\Mat_q(\K) & \longrightarrow \Mat_{p,q}(\K) \\
(X,Y) & \longmapsto XC-CY.
\end{cases}$$
By the rank theorem:
$$\dim \Ker f \geq \dim \bigl(\Mat_p(\K) \times\Mat_q(\K)\bigr)-\dim \Mat_{p,q}(\K)
=p^2+q^2-p\,q=(p-q)^2+p\,q \geq p\,q.$$
Since $n \geq 3$ and $p \in \lcro 1,n-1\rcro$, one has $p\,q \geq 2$,
which shows that $\Ker f \neq \Vect((I_p,I_q))$. Therefore, $\calC(\calA)$ is non-trivial.

\subsection{The case $\dim \calA=4$}\label{dim4}

We now assume $\dim \calA=4$ and set
$$\nu(\calA)=\bigl(\dim \calA_{1,1},\dim\calA_{1,2}, \dim \calA_{2,1},\dim \calA_{2,2}\bigr).$$
Transposition and conjugation by a permutation matrix help us reduce the situation to only three cases:
\begin{itemize}
\item $\nu(\calA)=(2,1,0,1)$;
\item $\nu(\calA)=(1,1,1,1)$;
\item $\nu(\calA)=(1,2,0,1)$.
\end{itemize}
In the first two cases, we will show that $\calC(\calA)$ is non-trivial, even when $n$ is odd.
In the last case, we will show that $\calC(\calA)$ is non-trivial when $n$ is even.

\subsubsection{The case $\nu(\calA)=(2,1,0,1)$.}
In this case, there is some $C \in \Mat_p(\K) \setminus \Vect(I_p)$ and some $V \in \Mat_{p,q}(\K) \setminus \{0\}$ such that
$\calA$ is generated by the matrices
$$\begin{bmatrix}
I_p & 0 \\
0 & 0
\end{bmatrix}, \; \begin{bmatrix}
0 & 0 \\
0 & I_q
\end{bmatrix}, \; \begin{bmatrix}
C & 0 \\
0 & 0
\end{bmatrix} \; \textrm{and} \;
\begin{bmatrix}
0 & V \\
0 & 0
\end{bmatrix}.$$
Since the product of the last two matrices belongs to $\calA$,
one must have $C\,V=\lambda\,V$ for some $\lambda \in \K$. Replacing $C$ with $C-\lambda.I_p$,
we may assume $C\,V=0$, in which case the matrix
$\begin{bmatrix}
C & 0 \\
0 & 0
\end{bmatrix}$ clearly belongs to $\calC(\calA) \setminus \Vect(I_n)$.

\subsubsection{The case $\nu(\calA)=(1,1,1,1)$.}
In this case, there are matrices $U\in \Mat_{q,p}(\K) \setminus \{0\}$ and
$V \in \Mat_{p,q}(\K) \setminus \{0\}$ such that $\calA$ is generated by the four matrices
$$\begin{bmatrix}
I_p & 0 \\
0 & 0
\end{bmatrix}, \; \begin{bmatrix}
0 & 0 \\
0 & I_q
\end{bmatrix}, \; \begin{bmatrix}
0 & 0 \\
U & 0
\end{bmatrix} \; \textrm{and} \;
\begin{bmatrix}
0 & V \\
0 & 0
\end{bmatrix}.$$
A matrix belongs to $\calC(\calA)$ if and only if it has the form
$\begin{bmatrix}
X & 0 \\
0 & Y
\end{bmatrix}$, where $(X,Y) \in \Mat_p(\K) \times \Mat_q(\K)$ satisfies
$UX=YU$ and $XV=VY$. \\
One obvious element in this centralizer is
$\begin{bmatrix}
VU & 0 \\
0 & UV
\end{bmatrix}$.
Assume now that $\calC(\calA)$ is trivial.
There would then exist some $\lambda \in \K$ such that $VU=\lambda.I_p$ and $UV=\lambda.I_q$.
Two situations may arise:
\begin{itemize}
\item The case $\lambda \neq 0$.
Replacing $V$ by $\frac{1}{\lambda}\,V$, we may assume $\lambda=1$.
Then standard rank considerations show that $p=q$ and $V=U^{-1}$. Equality
$XV=VY$ thus implies $UX=YU$, hence $\calA$ has the same centralizer as the subalgebra generated by
$P$, $Q$ and $\begin{bmatrix}
0 & V \\
0 & 0
\end{bmatrix}$, which has been shown to be non-trivial in Section \ref{dim3}. There lies a contradiction.

\vskip 2mm
\item The case $\lambda=0$. Then $UV=0$ and $VU=0$.
Since neither $U$ nor $V$ is zero, we deduce that
$\Ker U$ and $\im V$ are non-trivial subspaces of $\K^p$ and
$\Ker V$ and $\im U$ are non-trivial subspaces of $\K^q$.
We can therefore construct rank $1$ matrices $X \in \Mat_p(\K)$ and $Y \in \Mat_q(\K)$ such that
$\im X \subset \Ker U$, $\im V \subset \Ker X$, $\im Y \subset \Ker V$ and
$\im U \subset \Ker Y$. Hence $XV=VY=0$ and $UX=YU=0$, and it follows that the block diagonal matrix
$\begin{bmatrix}
X & 0 \\
0 & Y
\end{bmatrix}$ belongs to $\calC(\calA) \setminus \Vect(I_n)$, another contradiction.
\end{itemize}
The previous \emph{reductio ad absurdum} then proves that $\calC(\calA)$ is non-trivial.

\subsubsection{The case $\nu(\calA)=(1,2,0,1)$.}
We actually lose no generality assuming $p \leq q$ (if not, we simply replace $\calA$ with $\calA^t$
and conjugate it by an appropriate permutation matrix).
We then find linearly independent matrices $U$ and $V$ in $\Mat_{p,q}(\K)$ such that
$\calA$ is generated by
$$\begin{bmatrix}
I_p & 0 \\
0 & 0
\end{bmatrix}, \; \begin{bmatrix}
0 & 0 \\
0 & I_q
\end{bmatrix}, \; \begin{bmatrix}
0 & U \\
0 & 0
\end{bmatrix} \; \textrm{and} \;
\begin{bmatrix}
0 & V \\
0 & 0
\end{bmatrix}.$$
The considerations of Section \ref{proofexamples}
show that the kernel of
$$g : \begin{cases}
\Mat_p(\K) \times \Mat_q(\K) & \longrightarrow \Mat_{p,q}(\K)^2 \\
(X,Y) & \longmapsto (XU-UY\,,\,XV-VY)
\end{cases}$$
has a dimension greater than $1$ if and only if $\calC(\calA)$ is non-trivial.

\begin{lemme}\label{applilin2}
With the preceding notations, for to have $\dim \Ker g=1$, it is necessary that:
\begin{itemize}
\item either $q=p$ and at least one of the matrices $U$ or $V$ has rank $p$;
\item or $q=p+1$ and both matrices $U$ and $V$ have rank $p$.
\end{itemize}
\end{lemme}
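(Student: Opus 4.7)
The plan is to produce a sharp lower bound on $\dim \Ker g$ in terms of the ranks of $U$ and $V$, and then to read off the conclusion by simple arithmetic. Decompose $g$ as $(\phi_U, \phi_V)$, where $\phi_M : (X, Y) \mapsto XM - MY$ for any $M \in \Mat_{p,q}(\K)$; then $\Ker g = \Ker \phi_U \cap \Ker \phi_V$ sits inside the ambient space $\Mat_p(\K) \oplus \Mat_q(\K)$ of dimension $p^2 + q^2$.

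The first step is to compute $\rk \phi_M$ as a function of $r := \rk M$ alone. For $P \in \GL_p(\K)$ and $Q \in \GL_q(\K)$, the source isomorphism $(X, Y) \mapsto (P X P^{-1}, Q^{-1} Y Q)$ and the target isomorphism $N \mapsto P N Q$ intertwine $\phi_M$ with $\phi_{PMQ}$, so we may reduce $M$ to the canonical form $\begin{bmatrix} I_r & 0 \\ 0 & 0 \end{bmatrix}$ without affecting $\rk \phi_M$. A direct block computation in this form then yields
$$\rk \phi_M \,=\, r\,(p + q - r) \,=\, pq - (p - r)(q - r).$$

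Applying the elementary inequality $\dim(V_1 \cap V_2) \geq \dim V_1 + \dim V_2 - \dim W$ to $\Ker \phi_U$ and $\Ker \phi_V$ and substituting the rank formula above yields
$$\dim \Ker g \,\geq\, (p - q)^2 + (p - \rk U)(q - \rk U) + (p - \rk V)(q - \rk V).$$
Under the hypotheses $p \leq q$ and $\rk M \leq p$, each summand on the right is a non-negative integer, and $(p - \rk M)(q - \rk M)$ vanishes exactly when $\rk M = p$ and is at least $1$ otherwise. The assumption $\dim \Ker g = 1$ then forces the three summands to sum to at most $1$, so $(p - q)^2 \leq 1$ and hence $q \in \{p, p + 1\}$. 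If $q = p + 1$, the first summand already equals $1$, forcing the other two to vanish, i.e.\ $\rk U = \rk V = p$. If $q = p$, at most one of the remaining two summands can be non-zero, giving $\rk U = p$ or $\rk V = p$. The only slightly delicate point is the rank formula for $\phi_M$, but once $M$ is put in canonical form it reduces to a routine block calculation.
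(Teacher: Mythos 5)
Your proof is correct and follows essentially the same route as the paper: both arguments bound $\dim\Ker g$ from below via the subadditivity $\rk g\le \rk\phi_U+\rk\phi_V$ (your intersection inequality is the same statement after rank--nullity) and both reduce $M$ to the canonical form $\begin{bmatrix} I_r & 0\\ 0 & 0\end{bmatrix}$ to control $\rk\phi_M$. The only difference is that you compute $\rk\phi_M=pq-(p-r)(q-r)$ exactly, whereas the paper only records the cruder fact that $\phi_M$ is not onto when $\rk M<p\le q$; your sharper bound is a harmless refinement and the case analysis at the end is the same.
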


In order to prove this lemma, we will start with another one:

\begin{lemme}\label{notontolemma}
Let $W \in \Mat_{p,q}(\K)$ be such that $\rk W<p \leq q$. Then the linear map
$$f : \begin{cases}
\Mat_p(\K) \times \Mat_q(\K) & \longrightarrow \Mat_{p,q}(\K) \\
(X,Y) & \longmapsto XW-WY
\end{cases}$$
is not onto.
\end{lemme}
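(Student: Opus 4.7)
The plan is to exploit the fact that the surjectivity of $f$ depends only on $W$ up to left--right equivalence, and then pick a convenient representative of that equivalence class in which the obstruction becomes visible as the vanishing of an explicit block of the output.

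The first step is a reduction. For any $P\in\GL_p(\K)$ and $Q\in\GL_q(\K)$, set $W':=P^{-1}WQ$ and let $f'$ denote the map analogous to $f$ built from $W'$. Since $W=PW'Q^{-1}$, a direct computation yields
$$f(PX'P^{-1},QY'Q^{-1})=P\,f'(X',Y')\,Q^{-1}$$
for every $(X',Y')\in \Mat_p(\K)\times \Mat_q(\K)$. Because $P$ and $Q$ are invertible, the changes of variables $X\mapsto PXP^{-1}$ and $Y\mapsto QYQ^{-1}$ are bijections, so $\im f=P\cdot\im f'\cdot Q^{-1}$, and $f$ is onto if and only if $f'$ is. Choosing $P,Q$ so as to put $W$ in its canonical rank form, I may therefore assume
$$W=\begin{bmatrix} I_r & 0 \\ 0 & 0 \end{bmatrix}$$
where $r:=\rk W$, with blocks of sizes $(r,p-r)$ along the rows and $(r,q-r)$ along the columns.

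The second step is a routine block computation. Writing $X$ and $Y$ in block form conformally with $W$, one obtains
$$XW-WY=\begin{bmatrix} X_{11}-Y_{11} & -Y_{12} \\ X_{21} & 0 \end{bmatrix},$$
so the bottom-right $(p-r)\times(q-r)$ block of every element of $\im f$ vanishes. For the conclusion, observe that $r<p$ gives $p-r\geq 1$, while $r<p\leq q$ gives $q-r\geq 1$, so this forced-zero block has positive size. Hence $\im f$ lies in a proper subspace of $\Mat_{p,q}(\K)$ and $f$ is not onto.

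There is no substantial obstacle here: the only genuine insight is the first step, namely that two-sided multiplication of $W$ by invertible matrices preserves the surjectivity of $f$. Once that is recognized, the canonical-form representative $W=\bigl[\begin{smallmatrix} I_r & 0\\ 0 & 0\end{smallmatrix}\bigr]$ makes the obstruction to ontoness visible by inspection.
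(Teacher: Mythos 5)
Your proof is correct and follows essentially the same route as the paper: reduce to the canonical rank form of $W$ via the observation that surjectivity of $f$ is invariant under replacing $W$ by an equivalent matrix (with the corresponding conjugations of $X$ and $Y$), then note that the image then has a forced-zero block in the bottom-right corner. The paper only records the vanishing of the single $(p,q)$ entry, while you exhibit the full $(p-r)\times(q-r)$ zero block, but this is the same argument.
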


\begin{proof}
Notice that $\rk f$ is unchanged by replacing $W$ with an equivalent matrix:
for every $(P_1,Q_1)\in \GL_p(\K) \times \GL_q(\K)$ and
$(X,Y)\in \Mat_p(\K) \times \Mat_q(\K)$, one can indeed write:
$$XP_1WQ_1-P_1WQ_1Y=P_1\,\Bigl[(P_1^{-1}XP_1)W-W(Q_1YQ_1^{-1})\Bigr]\,Q_1.$$
Letting $r:=\rk W$, we then lose no generality assuming that $W=\begin{bmatrix}
I_r & 0 \\
0 & 0
\end{bmatrix}$. In this case however, every matrix of $\im f$
has a zero entry in position $(p,q)$, hence $f$ is not onto.
\end{proof}

\begin{proof}[Proof of Lemma \ref{applilin2}]
setting $f_1 : (X,Y) \longmapsto XU-UY$ and $f_2 : (X,Y) \longmapsto XV-VY$,
we find that $\rk g \leq \rk f_1+\rk f_2$ whilst Lemma \ref{notontolemma} shows that
$\rk f_1+\rk f_2 \leq 2\,\Mat_{p,q}(\K)-m$ where $m$ is the number of matrices in $\{U,V\}$
which have rank lesser than $p$. We deduce that $\rk g \leq 2\,p\,q-m$.
The rank theorem then shows that
$$\dim \Ker g \geq p^2+q^2-2\,p\,q+m=(q-p)^2+m.$$
If $q \geq p+2$, then $\dim \Ker g \geq 4$. If
$q=p+1$ and $m \geq 1$, then $\dim \Ker g \geq 2$.
If $q=p$ and $m=2$, then $\dim \Ker g \geq 2$. This proves the claimed results. 
\end{proof}

\vskip 4mm
\noindent We may now complete the proof of Theorem \ref{maintheo}.
Assume $n$ is even. Then Lemma \ref{applilin2}
shows $\calC(\calA)$ is non-trivial unless $p=q=\frac{n}{2}$ and one of the matrices $U$ and $V$ is non-singular. \\
Assume then $p=q=\frac{n}{2}$ and $U$ is non-singular (for example). 
Conjugating $\calA$ by
$\begin{bmatrix}
I_p & 0 \\
0 & U
\end{bmatrix}$, we are then reduced to the case $U=I_p$: in this case $V \not\in \Vect(I_p)$ and
the matrix $\begin{bmatrix}
V & 0 \\
0 & V
\end{bmatrix}$ clearly belongs to $\calC(\calA) \setminus \Vect(I_n)$.

\vskip 4mm
\noindent Let us finish this section by summing up the results in the case $\dim \calA=4$.
Recall that we have assumed that $\K$ is algebraically closed or that $\calA$ contains a non-trivial idempotent. 
\begin{enumerate}[(i)]
\item If $n$ is even, then $\calC(\calA)$ is non-trivial.
\item If $n$ is odd and $\calC(\calA)$ is trivial, then, setting $p:=\frac{n-1}{2}$,
there are linearly independent matrices $U$ and $V$ in $\Mat_{p,p+1}(\K)$ with rank $p$
and a non-singular matrix $P \in \GL_n(\K)$ such that
either
$$P\,\calA\,P^{-1}=\Biggl\{\begin{bmatrix}
a.I_p & c.U+d.V \\
0 & b.I_{p+1}
\end{bmatrix} \mid (a,b,c,d)\in \K^4\Biggr\}$$
or
$$P\,\calA^t\,P^{-1}=\Biggl\{\begin{bmatrix}
a.I_p & c.U+d.V \\
0 & b.I_{p+1}
\end{bmatrix} \mid (a,b,c,d)\in \K^4\Biggr\}.$$
\end{enumerate}
This of course completes the proof of Theorem \ref{maintheo}.

\section{On subalgebras of dimension $4$ of $\Mat_{2n+1}(\K)$ with a trivial centralizer}

Here, we establish Proposition \ref{unicity} by prolonging the proof of Section \ref{dim4} in the case
$\dim \calA=4$. We must first return to the situation where $\K$ is an arbitrary field.

\begin{lemme}\label{ranklemma}
Let $n \in \N^*$ and $\calA$ be a $4$-dimensional subalgebra of $\Mat_{2n+1}(\K)$
with a trivial centralizer. Then $\calA$ contains a rank $n$ idempotent.
\end{lemme}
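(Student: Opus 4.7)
The plan is to reduce to showing that $\calA$ always contains a non-trivial idempotent. Once that is established, the case analysis of Section \ref{dim4} applies: the remark at its opening explicitly notes that only a non-trivial idempotent in $\calA$, and not algebraic closure of $\K$, is needed. Item (ii) of the summary at the end of Section \ref{dim4} then shows that either $\calA$ or $\calA^t$ is conjugate in $\GL_{2n+1}(\K)$ to a subalgebra whose elements are block-upper-triangular with scalar diagonal blocks of sizes $n$ and $n+1$; this subalgebra visibly contains the rank-$n$ idempotent $\begin{bmatrix} I_n & 0 \\ 0 & 0 \end{bmatrix}$. Conjugation and transposition preserve rank and idempotence, so $\calA$ itself contains a rank-$n$ idempotent.

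To prove the reduction, I will argue by contradiction. Suppose $\calA$ admits no non-trivial idempotent. Since idempotents lift modulo the Jacobson radical of a finite-dimensional algebra, this forces $D := \calA/\mathrm{rad}(\calA)$ to be a division $\K$-algebra. Examining the radical filtration of $\K^{2n+1}$ as a left $\calA$-module, every successive quotient is annihilated by $\mathrm{rad}(\calA)$, hence is a free $D$-module; summing $\K$-dimensions yields $[D:\K]\mid 2n+1$. Since $[D:\K]\leq \dim_\K \calA = 4$ and $2n+1$ is odd, we are left with $[D:\K]\in\{1,3\}$.

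If $[D:\K]=1$, then every element of $\calA$ has the form $\lambda I_{2n+1}+N$ with $N$ nilpotent, so $\calA$ is unispectral and its centralizer is non-trivial by the corollary in Section \ref{unispect}, contradicting our hypothesis. If $[D:\K]=3$, write $\L:=D$ and $\mathrm{rad}(\calA)=\K r$, and note $r^2=0$ (otherwise $r$ would be proportional to a non-trivial idempotent). Pick $a\in\calA$ whose residue modulo the radical generates $\L$ over $\K$: then $\dim_\K \K[a]\in\{3,4\}$. When $\dim\K[a]=4$, the subalgebra $\calA=\K[a]$ is commutative, so $\calC(\calA)\supseteq\calA$ has dimension $4>1$, a contradiction. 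When $\dim\K[a]=3$, $\K[a]\simeq\L$ is a subfield of $\calA$ complementary to the radical; the relation $\ell r=\psi(\ell)r$ then defines a unital $\K$-algebra homomorphism $\psi:\L\to\K$, which cannot exist since $[\L:\K]=3$.

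The main obstacle is the subcase $\dim_\K\K[a]=3$ within $[D:\K]=3$: one must recognize a subfield of $\calA$ complementary to the radical and extract the impossible $\K$-algebra morphism $\L\to\K$ from its action on the $1$-dimensional radical. The remaining ingredients, namely the divisibility of $2n+1$ by $[D:\K]$ and the reduction to the unispectral case when $[D:\K]=1$, rely on routine structural considerations together with the results already established in Sections \ref{unispect} and \ref{dim4}.
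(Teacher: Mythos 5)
Your proof is correct but takes a genuinely different route from the paper's. The paper's argument extends scalars to an algebraic closure $\L$, uses the classification of Section \ref{dim4} over $\L$ to see that some element $M$ of any $\K$-basis of $\calA$ is non-unispectral and diagonalisable with eigenvalue multiplicities $n$ and $n+1$, checks via the minimal polynomial and the trace that both eigenvalues $\lambda,\mu$ already lie in $\K$, and writes the rank-$n$ idempotent down explicitly as $\frac{1}{\lambda-\mu}(M-\mu.I_{2n+1})$. You instead manufacture a non-trivial idempotent by pure structure theory (idempotent lifting modulo the radical, Wedderburn, and the divisibility $[D:\K]\mid 2n+1$ obtained from the radical filtration of $\K^{2n+1}$), and then feed it back into Section \ref{dim4} --- which, as the remark in Section \ref{split} notes, only needs the idempotent --- to upgrade it to one of rank $n$. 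The elimination of $[D:\K]=3$ via the impossible unital $\K$-algebra morphism $\psi:\L\to\K$ coming from the action on the one-dimensional radical is correct (you implicitly use that a $3$-dimensional division algebra over $\K$ is a field generated by a single element, which is standard and in fact follows from your own $\K[\bar a]$ computation, since $[\K[\bar a]:\K]$ divides the prime $3$). One citation needs repair: in the case $[D:\K]=1$ you invoke the corollary of Section \ref{unispect}, which is proved only over an algebraically closed field (it rests on Burnside's theorem), so it cannot be cited verbatim over an arbitrary $\K$. The fix is immediate in your setting: either observe that $\calA_{\bar\K}=\bar\K.I_{2n+1}+\mathrm{rad}(\calA)_{\bar\K}$ is still unispectral and descend via Section \ref{red}, or, more directly, take $k$ maximal with $\mathrm{rad}(\calA)^k\neq\{0\}$ and note that any non-zero $N\in\mathrm{rad}(\calA)^k$ is a non-scalar element of $\calC(\calA)$. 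With that patch your argument is complete; its merit is that it never leaves the ground field to produce the idempotent, whereas the paper's proof obtains the rank-$n$ idempotent in one stroke, by an explicit formula, without any recourse to Wedderburn theory.
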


\begin{proof}
Choose an algebraically closed extension $\L$ of $\K$.
Then $\calA_\L$ has a trivial centralizer in $\Mat_{2n+1}(\L)$.
The proof in Sections \ref{red}, \ref{unispect}, \ref{split} and \ref{dim4}
then shows that there is a $2$-dimensional subspace $P \subset \Mat_{n,n+1}(\K)$
such that $\calA_\L$ is conjugate to either the subalgebra
$$\calH:=\Biggl\{\begin{bmatrix}
a.I_n & M \\
0 & b.I_{n+1}
\end{bmatrix} \mid (a,b)\in \L^2, \; M \in P\Biggr\}$$
or its transpose $\calH^t$.
In any case, the set of unispectral elements in $\calA_\L$
is a linear hyperplane of $\calA_\L$:
this is the case indeed when $\calA_\L=\calH$ since this subset is then
$$\Biggl\{\begin{bmatrix}
a.I_n & M \\
0 & a.I_{n+1}
\end{bmatrix} \mid a\in \L, \; M \in P\Biggr\}.$$
Also, every non-unispectral element of $\calA_\L$
is clearly diagonalisable with exactly two eigenvalues of respective orders $n$ and $n+1$. \\
Every basis of the $\K$-vector space $\calA$ is also a basis of the $\L$-vector space
$\calA_\L$ and therefore must contain a matrix which is not unispectral in $\Mat_n(\L)$.
Let us choose such a matrix $M \in \calA$, with eigenvalues $\lambda$ and $\mu$ of respective orders $n$ and $n+1$.
Notice then that $\lambda$ and $\mu$ belong to $\K$. Indeed:
\begin{itemize}
\item the minimal polynomial of $M$ is
$X^2-(\lambda+\mu)X+\lambda\,\mu$, so $\lambda+\mu \in \K$ (implicit here is the fact that the minimal polynomial of a matrix is
unchanged by extending the field of scalars);
\item also $\tr(M)=n(\lambda+\mu)+\mu$, which entails $\mu \in \K$ and therefore $\lambda \in \K$.
\end{itemize}
We deduce that $\frac{1}{\lambda-\mu}\,(M-\mu.I_{2n+1})$ is a rank $n$ idempotent in $\calA$.
\end{proof}

\noindent Proposition \ref{unicity} can now be proven.
Since $\calA$ contains an idempotent of rank $n$,
the proof from Sections \ref{split} and \ref{dim4} shows that we can reduce the study to the situation where there
is a $2$-dimensional subspace $\calP \subset \Mat_{n,n+1}(\K)$ such that
$$\calA=\Biggl\{\begin{bmatrix}
a.I_n & M \\
0 & b.I_{n+1}
\end{bmatrix} \mid (a,b)\in \K^2, \; M \in \calP\Biggr\}.$$
Let $U \in \calP \setminus \{0\}$ and choose $V$ such that $(U,V)$ is a basis of $\calP$.
Then Lemma \ref{ranklemma} shows that $U$ (and $V$) must have rank $n$.
For every extension $\L$ of $\K$, the subalgebra $\calA_\L$ has a trivial centralizer, which shows
$\rk U=n$ for every $U \in P_\L \setminus \{0\}$. We will then use the following result to see that $\calP$
is equivalent to the $2$-dimensional subspace $\Vect(C_n,D_n)$ i.e.\
$\calP=P\,\Vect(C_n,D_n)\,Q$ for some pair $(P,Q)\in \GL_n(\K) \times \GL_{n+1}(\K)$:

\begin{prop}\label{kronecker}
Let $A$ and $B$ in $\Mat_{n,n+1}(\K)$.
Assume that every non-trivial linear combination of $A$ and $B$ over any field extension of $\K$
has rank $n$. Then there is a pair $(P,Q)\in \GL_n(\K) \times \GL_{n+1}(\K)$
such that $A=P\,C_n\,Q^{-1}$ and $B=P\,D_n\,Q^{-1}$.
\end{prop}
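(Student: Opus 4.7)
The plan is to proceed by induction on $n$, constructing bases of $\K^{n+1}$ and $\K^n$ in which $A$ and $B$ become $C_n$ and $D_n$. This amounts to Kronecker's structure theorem specialised to an $n \times (n+1)$ pencil of constant rank $n$, which forces the single block $L_n$; I would give a direct self-contained argument.

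First I would record two preliminary facts. Since $A$ and $B$ are themselves nontrivial combinations, each has rank $n$, hence $\Ker A$ and $\Ker B$ are $1$-dimensional. Moreover $\Ker A \cap \Ker B = \{0\}$: otherwise a common generator $u$ would yield isomorphisms $\bar A, \bar B : \K^{n+1}/\Vect(u) \to \K^n$, and the endomorphism $\bar B \bar A^{-1}$ would admit an eigenvalue $\lambda$ over the algebraic closure of $\K$, producing $\dim \Ker(B - \lambda A) \geq 2$ after extension of scalars and contradicting the hypothesis.

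For the inductive step, I would pick $u_1 \in \Ker B \setminus \{0\}$ and set $w_1 := A u_1 \neq 0$. The operators $A$ and $B$ descend to maps $\tilde A, \tilde B : \K^{n+1}/\Vect(u_1) \to \K^n/\Vect(w_1)$. A rank-counting argument guarantees that $(\tilde A, \tilde B)$ inherits the constant-rank hypothesis in reduced dimensions $n$ and $n-1$: if $s\tilde A + t\tilde B$ had rank $\leq n-2$ over an extension $\L$, lifting a $2$-dimensional subspace of its kernel to a $3$-dimensional subspace of $\L^{n+1}$ containing $u_1$ would produce a subspace mapped by $sA + tB$ into the line $\Vect(w_1)_\L$, forcing $\dim \Ker(sA+tB)_\L \geq 2$ and contradicting the hypothesis. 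Induction then yields bases $(\bar u_2, \ldots, \bar u_{n+1})$ and $(\bar w_2, \ldots, \bar w_n)$ of the two quotients in which $(\tilde A, \tilde B)$ is realised as $(C_{n-1}, D_{n-1})$.

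The closing step is to lift these bases to $u_j \in \K^{n+1}$ and $w_j \in \K^n$ for $j \geq 2$, then adjust them so that the identities $Au_j = w_j$, $Bu_{j+1} = w_j$, $Au_{n+1} = 0$, $Bu_1 = 0$ hold exactly rather than merely modulo $\Vect(w_1)$. Each raw lift satisfies the desired identity up to an error in $\Vect(w_1)$, and solving a triangular system for adjustment parameters $c_j, d_j \in \K$ (where $u_j \mapsto u_j + c_j u_1$ alters only the $A$-image while $w_j \mapsto w_j + d_j w_1$ is a change of representatives downstairs) fixes everything except the identity $Bu_2 = w_1$. This last identity follows from a single rescaling $u_1 \mapsto \beta u_1$, $w_1 \mapsto \beta w_1$, where $\beta \in \K^\times$ is the coefficient of $w_1$ in the expression for $Bu_2$; it is nonzero because $u_2 \notin \Vect(u_1) = \Ker B$. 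Linear independence of the resulting vectors follows from independence in the quotients together with the complementary directions provided by $u_1$ and $w_1$. The main obstacle, though routine, is this final lift-and-adjust bookkeeping; the base case $n=1$, where one picks $u_1 \in \Ker B \setminus \{0\}$ and then any $u_2$ with $Bu_2 = Au_1$, is immediate.
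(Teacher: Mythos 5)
Your proof is correct, but it takes a genuinely different route from the paper. The paper treats the statement as a direct consequence of the Kronecker--Weierstrass canonical form for matrix pencils: it writes the pencil $A+xB$, cites the classification of its blocks (Gantmacher, plus \cite{dSPsim} for arbitrary fields), and rules out every block other than a single $L_n$ using the constant-rank hypothesis. You instead reprove the relevant special case of Kronecker's theorem from scratch by induction on $n$: the key observations --- that $\Ker A\cap\Ker B=\{0\}$ (via an eigenvalue of $\bar B\bar A^{-1}$ over an extension), and that passing to the quotients $\K^{n+1}/\Vect(u_1)\to\K^n/\Vect(Au_1)$ preserves the constant-rank hypothesis (via the rank count $\rk(sA+tB)\le n-1$ when the induced map drops rank by two) --- are exactly what is needed, and the lift-and-adjust bookkeeping is solvable as you describe: the $B$-identities force the corrections $d_j$ of the $w_j$, each correction $c_j$ of $u_j$ then appears in exactly one $A$-identity, and the final rescaling $u_1\mapsto\beta u_1$, $w_1\mapsto\beta w_1$ is legitimate since it leaves all the other identities untouched. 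What your approach buys is a self-contained, elementary argument valid over any field without appeal to pencil theory; what the paper's approach buys is brevity and the ability to see at a glance which canonical blocks are excluded. One small slip: in your base case $n=1$, an arbitrary $u_2$ with $Bu_2=Au_1$ need not satisfy $Au_2=0$, so you still need the correction $u_2\mapsto u_2-au_1$ where $Au_2=a\,Au_1$ --- the same adjustment as in the general step, so nothing is lost.
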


\noindent Assuming this to be true, consider a basis $(A,B)$ of $\calP$ and a pair $(P,Q)$
associated to it as in Proposition \ref{kronecker}.
Then a straightforward computation shows that $R\,\calH_{2n+1}(\K)\,R^{-1}=\calA$
for $R=\begin{bmatrix}
P & 0 \\
0 & Q
\end{bmatrix}$, which completes the proof of Theorem \ref{unicity}.

\begin{proof}[Proof of Proposition \ref{kronecker}]
We let $x$ denote an indeterminate.
We use the Kronecker-Weierstrass reduction for pencils of matrices (see chapter XII of \cite{Gantmacher} and
the appendix of \cite{dSPsim} for the case of an arbitrary field).
Since $A$ and $B$ have rank $n$ and belong to $\Mat_{n,n+1}(\K)$,
the canonical form of the pencil $A+x\,B$ cannot contain any block of the forms
$$\begin{bmatrix}
x & 0 &  & & \\
1 & x & 0 & &  \\
0 & 1 & x & &  \\
 & & \ddots & \ddots &  \\
 & & & \ddots & x \\
 &        &  & & 1
\end{bmatrix}\in \Mat_{p+1,p}(\K[x]) \quad ; \quad
\begin{bmatrix}
1 & x & 0 &  \\
0 & 1 & x &  \\
 & & \ddots & \ddots   \\
  & & & \ddots & x \\
 &        &  & & 1
\end{bmatrix}\in \Mat_p(\K[x])$$
nor
$$\begin{bmatrix}
x & 1 & 0 &  \\
0 & x & 1 &  \\
 & & \ddots & \ddots   \\
   & & & \ddots & 1 \\
 &        &  & & x
\end{bmatrix}\in \Mat_p(\K[x])$$
and contains at most one block of the form
$$L_p:=\begin{bmatrix}
1 & x & 0 & &  \\
0 & 1 & x & &  \\
 & & \ddots & \ddots &  \\
 &        &  & 1 & x
\end{bmatrix} \in \Mat_{p,p+1}(\K[x]).$$
It follows that there exists some $p \in \lcro 0,n-1\rcro$, some non-singular $C \in \GL_p(\K)$
and some pair $(P,Q)\in \GL_n(\K) \times \GL_{n+1}(\K)$ such that
$$P^{-1}\,(A+x\,B)\,Q=
\begin{bmatrix}
C+x.I_p & 0 \\
0 & L_{n-p}
\end{bmatrix}.$$
However, if $p>0$, then $\rk(A-\lambda\,B)<n$ for any eigenvalue $\lambda$ of $C$,
which contradicts the assumptions. It follows that $p=0$, hence
$P^{-1}\,(A+x\,B)\,Q=L_n=C_n+x\,D_n$, which shows $A=P\,C_n\,Q^{-1}$ and $B=P\,D_n\,Q^{-1}$.
\end{proof}

\section*{Acknowledgements}

I would like to thank Saab Abou-Jaoud\'e for submitting me the original problem
and Pierre Mazet for inspiring part of this work.

\end{document}